\documentclass[12pt]{article}
\usepackage{graphicx,epsfig}
\usepackage{color}
\usepackage{amsmath,amssymb,mathrsfs,amsthm}
\usepackage{amsfonts}
\usepackage{multicol}
\setlength{\parindent}{0pt}
\usepackage{graphicx}
\usepackage{cite}
\usepackage{color}
\usepackage{pdfcolmk}
\setlength{\arrayrulewidth}{0.85pt}
\usepackage[nice]{nicefrac}
\usepackage{amsthm}
\usepackage{latexsym}
\usepackage{dsfont}
\usepackage{setspace}
\usepackage{subfigure}
\usepackage[a4paper, tmargin=1.5in, bmargin=1in, lmargin=1in, rmargin=1in, headheight=13.6pt]{geometry}
\usepackage[sort&compress,numbers]{natbib}
\usepackage[fit]{truncate}
\usepackage[font=small,labelfont=bf,labelsep=space]{caption}
\captionsetup{%
  figurename=Fig.,
  tablename=Table.
}
\parindent 2 pc
\parskip 7.5pt
\makeatletter


\def\@citex[#1]#2{\if@filesw\immediate\write\@auxout{\string\citation{#2}}\fi
  \def\@citea{}\@cite{\@for\@citeb:=#2\do
    {\@citea\def\@citea{,\linebreak[0]\hskip0pt plus .2em}%
      \@ifundefined{b@\@citeb}%
    {{\bf ?}\@warning{Citation `\@citeb' on page \thepage\space undefined}}%
      \hbox{\csname b@\@citeb\endcsname}}}{#1}}


\newtheorem{theorem}{Theorem}[section]

\newtheorem{problem}{Problem}[section]
\newtheorem{rule-def}[theorem]{Rule}
\numberwithin{equation}{section}
\makeatletter

\begin{document}
\title{Haar wavelet quasilinearization technique for doubly singular boundary value problems}
\author{Randhir Singh\thanks{Corresponding author. E-mail:~{randhir.math@gmail.com} },
Himanshu Garg\thanks{E-mail:~{himanshugargcse@gmail.com} } and
Apoorv Garg \thanks{E-mail:~{apoorv.garg.cse15@gmail.com} } \\
\small $^{*}$ Department of Mathematics, Birla Institute of  Technology  Mesra,  India. \\
\small $^\dag$ Department of Computer Science Engineering, Birla Institute of  Technology  Mesra,  India.\\
}

 \maketitle{}
\begin{abstract}
 \noindent The Haar wavelet based quasilinearization technique for solving a general class of singular  boundary value problems
  is proposed. Quasilinearization technique is used to linearize nonlinear singular problem. Second rate of convergence is obtained of a sequence of linear singular problems. Numerical solution of linear singular problems is obtained by Haar-wavelet method.
  In each iteration of quasilinearization technique, the numerical solution is updated by the Haar wavelet method. Convergence analysis of Haar wavelet method is discussed. The results are compared with the results obtained by the other technique and
with exact solution. Eight singular problems are solved to show the applicability of the Haar wavelet  quasilinearization technique.
\end{abstract}
\textbf{Keyword}: Doubly singular boundary value problem; Haar Wavelet; Quasilinearization; Lane-Emden equation; Convergence analysis; Green's function
\section{Introduction}
In this paper, we consider the following  class of nonlinear doubly singular  boundary value problems (DSBVPs) \cite{Bobisud1990,singh2014approximate,singh2014adomian}
\begin{eqnarray}\label{sec:eq1}
(p(x)y'(x))'= q(x)f(x,y(x)),~~~~~~~~0<x<1,
\end{eqnarray}
with Dirichlet  boundary conditions (BCs.)
\begin{align}
\label{sec:eq2} &y(0)=\alpha_1,~~~y(1)=\beta_1,
\end{align}
and Neumann-Robin  BCs.
\begin{align}
\label{sec:eq3} y'(0)=0,~~~\alpha_2\; y(1)+\beta_2\; y'(1)=\gamma_2,
\end{align}
where $\alpha_1$, $\beta_1$, $\alpha_2$, $\beta_2$ and $\gamma_2$ are any real constants.
Here, $p(0)=0$ and $q(x)$ may be discontinuous at $x=0$. Throughout this paper, the following conditions are assumed on $p(x)$, $q(x)$ and $f(x,y)$:\vspace{-0.5cm}
\begin{description}
\item $(C_1)$ $p(x)\in C[0,1]\cap C^{1}(0,1]$ and $p(x)>0,~q(x)>0\in(0,1]$.
\item $(C_2)$ $ \displaystyle \frac{1}{p(x)}\in L^{1}(0,1]$ and $ \displaystyle \int\limits_{0}^{1}\frac{1}{p(x)}\int\limits_{x}^{1}q(s)ds\; dx<\infty$.  (for boundary conditions \eqref{sec:eq2})
\item $(C_3)$   $q(x)\in L^{1}(0,1]$ and
$\displaystyle  \int\limits_{0}^{1}\frac{1}{p(x)}\int\limits_{0}^{x}q(s) ds\; dx<\infty$.~~ (for boundary conditions \eqref{sec:eq3})
 \item $(C_4)$ $f(x,y),~f_{y}(x,y)\in C(\Omega)$ and $f_{y}(x,y)\geq 0$ on  $ \Omega$, where $ \Omega:=\{(0,1]\times \mathbb{R}\}$.
\end{description}
The well known Thomas-Fermi equations \cite{thomas1927calculation,fermi1927metodo}, is modeled by the problem \eqref{sec:eq1}, where $p(x)=1$,  $q(x)=x^{-\frac{1}{2}}$
 and $f=y^{\frac{3}{2}}.$ The problem \eqref{sec:eq1}, where $p(x)=q(x)=x^{2}$  arises in oxygen diffusion in a spherical cell \cite{lin1976oxygen,anderson1980complementary} with  $f$ of the form $$f(x, y)= \frac{n y}{y+k},~~n>0,~~k>0,$$
and in modelling of heat conduction in human head \cite{flesch1975distribution,gray1980distribution,duggan1986pointwise}
 with $f$ of the form $$f(x, y)=-\delta e^{-\theta y},~~~\theta>0,~~~\delta>0.$$
 Existence and uniqueness of doubly singular  boundary value problems \eqref{sec:eq1} with BCs.  \eqref{sec:eq2} and \eqref{sec:eq3} can be found in  \cite{chawla1987existence,dunninger1986existence,Bobisud1990,pandey2009note}. In general, such singular problems are difficult to solve due its singular behavior at $x=0$. There are several techniques to solve doubly singular  boundary value problems \eqref{sec:eq1} with BCs. \eqref{sec:eq3} where $p(x)=q(x)=x^{\alpha}$ for $\alpha>0$ or $0<\alpha<1$. The numerical study  of doubly singular  boundary value problems  has been carried out for past couple of decades and still it is an active area of research to develop some better numerical schemes. So far various numerical
methods such as the collocation methods \cite{Reddien1973projection,russell1975numerical}, tangent chord method   \cite{duggan1986pointwise}, finite difference methods \cite{jamet1970convergence,chawla1982finite,chawla1984finite}, spline finite difference methods \cite{iyengar1986spline}, B-Spline method  \cite{kadalbajoo2007b}, spline method \cite{kumar2007higher},  Chebyshev economization method \cite{kanth2003numerical}, Cubic spline method   \cite{kanth2005cubic,kanth2006cubic,kanth2007cubic},  Adomian decomposition method (ADM) and modified ADM \citep{inc2005different,mittal2008solution,khuri2010novel,ebaid2011new,Kumar2010}, ADM with Green's function  \cite{singh2013numerical,singh2014efficient}, variational iteration method (VIM) \cite{wazwaz2011comparison,wazwaz2011variational,ravi2010he}, the optimal modified VIM  \cite{singh2017optimal}, homotopy analysis method  \cite{danish2012note,roul2017new} and  homotopy perturbation method \cite{roul2016new} and the references cited therein.

\noindent In the recent years the Haar wavelet technique  has been popular in the field of numerical approximations. The basic idea of  the Haar wavelets and its applications can be found in \cite{hsiao2001haar,hsiao2004haar,lepik2005numerical,lepik2007numerical,ur2012numerical,singh2016haar,babaaghaie2017numerical,aziz2010numerical,aziz2014new}. The Haar wavelets have gained popularity among researchers for their useful properties such as simple applicability, orthogonality and compact support. Compact support of the Haar wavelet
basis permits straight inclusion of the different types of boundary conditions in the numeric algorithms. Due to the linear
and piecewise nature, the Haar wavelet basis lacks differentiability and hence the integration approach will be used instead
of the differentiation for calculation of the coefficients.  Boundary value problems are considerably more difficult to deal with than initial value problems (IVPs).

\noindent  The Haar wavelet method for BVPs is more complicated than for IVPs.  The quasilinearization approach was introduced by
 Bellman and Kalaba \cite{bellman1965quasilinearization} to solve the individual or systems of nonlinear ordinary
 and partial differential equations.  The application of Haar wavelet method quasilinearization technique  for solving different models can be found in \cite{jiwari2012haar,kaur2013haar,saeed2013haar}.

\noindent In this work, an efficient numerical method based on Haar wavelets quasilinearization technique is  proposed for solving doubly singular  boundary value problems \eqref{sec:eq1} with BCs.  \eqref{sec:eq2}  and  \eqref{sec:eq3}.  The main aim of the present paper is to obtain numerical solutions of nonlinear doubly singular boundary value problems  over a uniform grids with a simple method based on the Haar wavelets and quasilinearization technique.  The quasilinearization technique is used to linearize nonlinear singular problem. Second rate of convergence is obtained of a sequence of linear singular problems. Numerical solution of linear singular problems is obtained by Haar-wavelet method. In each iteration of quasilinearization technique, the numerical solution is updated by the Haar wavelet method. Convergence analysis of Haar wavelet method is discussed. The accuracy of the proposed scheme is demonstrated by eight  singular problems contain  various forms of nonlinearity. The numerical results are compared with existing  numerical and exact solutions and it is found that the proposed scheme produce better results. The use of  Haar wavelet, is found to be accurate, fast, flexible, convenient and has small computation costs.

\section{Quasilinearization}
In this section,  the quasilinearization technique \cite{bellman1965quasilinearization} is used to reduce nonlinear DSBVPs  \eqref{sec:eq1} to a sequence of linear problems as
\begin{align}\label{sec2:eq2}
(p(x)y_{n+1}')'=q(x) \left\{f(x,y_{n})+(y_{n+1}(x)-y_{n})f_{y}(x,y_{n})\right\}.
\end{align}
The  sequence of linear problem \eqref{sec2:eq2} may be written  as
\begin{align}\label{sec2:eq3}
p(x) y''_{n+1}+p'(x)y'_{n+1}+r(x)y_{n+1}=g(x),~~~~~n=0,1,2\ldots.
\end{align}
where $r(x)$ and $g(x)$ are given by 
\begin{align*}
r(x)=-q(x)f_y(x,y_n)~~~\hbox{and}~~~g(x)=q(x)(f(x,y_n)-y_n f_y(x,y_n)).
\end{align*}
Boundary conditions \eqref{sec:eq2} and \eqref{sec:eq3} take the form
\begin{align}
\label{sec2:eq4}& y_{n+1}(0)=\alpha_1,~~~~y_{n+1}(1)=\beta_1,\\
\label{sec2:eq5} & y'_{n+1}(0)=0,~~~~~\alpha_2\; y_{n+1}(1)+\beta_2\; y'_{n+1}(1)=\gamma_2.
\end{align}
Integral form of  DSBVPs \eqref{sec2:eq2} with  \eqref{sec2:eq4} and \eqref{sec2:eq5} is given by
\begin{align}\label{sec2:eq6}
y_{n+1}=v(x)+\int\limits_{0}^{1}G(x,t) q(t) \left\{f(t,y_{n})+(y_{n+1}-y_{n})f_{y}(t,y_{n})\right\} dt,
\end{align}
where  $v(x)$ and $G(x,t)$ corresponding to boundary conditions \eqref{sec2:eq4}, are  given by
\begin{align*}
&v(x)=\alpha_1+(\beta_1-\alpha_1) \frac{b(x)}{b(1)},\\
&G(x,t)=\left   \{
  \begin{array}{ll}
    \frac{b(x)}{b(1)}\big(b(1)-b(t)\big), & \hbox{$0< x \leq  t$}, \\
    \frac{b(t)}{b(1)}\big(b(1)-b(x)\big), & \hbox{$t \leq x\leq 1$}
\end{array}
\right.
\end{align*}
and corresponding to boundary conditions \eqref{sec2:eq5}, are  given by
\begin{align*}
&v(x)=\frac{\gamma_2}{\alpha_2},\\
&G(x,t)=\left   \{
  \begin{array}{ll}
  b(1)-b(t)+\frac{\beta_2 \;b'(1)}{\alpha_2} , & \hbox{$0<x\leq t$},\\
    b(1)-b(x)+\frac{\beta_2\; b'(1)}{\alpha_2} , & \hbox{$t\leq x\leq 1$}
\end{array}
\right.
\end{align*}
where $b(x)=\displaystyle \int\limits_{0}^{x}\frac{dt}{p(t)}$,~~ $b(1)=\displaystyle\int\limits_{0}^{1}\frac{dt}{p(t)}$ and $b'(1)=\displaystyle\frac{1}{p(1)}.$

\section{Derivation of Haar wavelets}

\subsection{Haar wavelets}
The basic idea of  the Haar wavelets and its applications can be found in \cite{hsiao2001haar,hsiao2004haar,lepik2005numerical,lepik2007numerical,ur2012numerical}. The Haar wavelet family defined on the interval $[0, 1)$ consists of the following functions:
 \begin{align}\label{sec3:eq3}
h_1(x)= \left   \{
  \begin{array}{ll}
   1, & ~~0\leq x<1,\\
   0,& \hbox{elsewhere} ,
\end{array}
\right.
\end{align}
and for $i=2,3,\ldots$
\begin{align}\label{sec3:eq1}
h_i(x)= \left   \{
  \begin{array}{ll}
   1, & \xi_1\leq x<\xi_2,\\
   -1,& \xi_2\leq x<\xi_3,\\
    0,& \hbox{elsewhere},
\end{array}
\right.
\end{align}
where
\begin{align*}
\xi_1=\frac{k}{m},~\xi_2=\frac{k+0.5}{m},~\xi_3=\frac{k+1}{m};~~m=2^{j},~~j=0,1,\ldots,~~k=0,1,\ldots,m-1.
\end{align*}
Integer $j$ indicates the level of resolution  the wavelet and  $k$ is the translation parameter.  The relation between $i, m$  and $k$  is given by  $i=m+k+1$.
 Note that the Haar functions may also be constructed from the following  relations
\begin{align}
\label{sec3:eq15a} &h_i(x)=2^{\frac{j}{2}} H(2^j x-k),~~~k=0,1,\ldots,2^j-1,~j=0,1,\ldots \vspace{0.15cm}\\
\label{sec3:eq15b} &H(2^j x-k)= \left   \{
  \begin{array}{ll}
   1, &  \displaystyle \frac{k}{2^{j}}\leq x<\frac{k+\frac{1}{2}}{2^{j}} \vspace{0.2cm}\\
   -1,&  \displaystyle \frac{k+\frac{1}{2}}{2^{j}}\leq x<\frac{k+1}{2^{j}}\\
    0,& \hbox{elsewhere}
\end{array}
\right.
\end{align}
Any function $y (x)\in L^2[0,1]$ may be approximated by a finite sum of Haar wavelets as follows
\begin{align}
 y(x)=\sum_{i=1}^{2M} a_i h_i(x),
\end{align}
where $J$ is the maximum value of $j$ and  $M =2^J$. For simplicity, we introduce the following notation
\begin{align}\label{sec3:eq6}
p_{i, 1}(x)=\int_{0}^{x} h_i(t) dt,~~p_{i, 2}(x)=\int_{0}^{x} p_{i, 1}(t) dt,~~C_{i, 1}=\int_{0}^{1} p_{i, 1}(t) dt.
\end{align}
Integrals \eqref{sec3:eq6}  can be evaluated by using \eqref{sec3:eq1} and given by
\begin{align}\label{sec3:eq7}
p_{i, 1}(x)= \left   \{
  \begin{array}{ll}
   x-\xi_1, & \xi_1\leq x<\xi_2,\\
   \xi_3-x,& \xi_2\leq x<\xi_3,\\
    0,& \hbox{elsewhere} ,
\end{array}
\right.
\end{align}
and
\begin{align}\label{sec3:eq8}
p_{i, 2}(x)= \left   \{
  \begin{array}{ll}
  \displaystyle  \frac{1}{2}(x-\xi_1)^2,   & \xi_1\leq x<\xi_2, \vspace{0.2cm}\\
   \displaystyle\frac{1}{4m^2} -\frac{1}{2} (\xi_3-x)^2,  & \xi_2\leq x<\xi_3, \vspace{0.2cm}\\
   \displaystyle\frac{1}{4m^2},  & \xi_3\leq x<1,\vspace{0.2cm}\\
    0,& \hbox{elsewhere} ,
\end{array}
\right.
\end{align}
\noindent Haar wavelet functions satisfy the following properties
\begin{align}
\int_{0}^{1} h_i(x) h_l(x) dx= \left \{
  \begin{array}{ll}
   2^{-j}, & i=l=2^{j}+k,\\
   0,& i\neq l.
\end{array}
\right.
\end{align}
and
\begin{align}\label{sec3:eq5}
\int_{0}^{1} h_{i}(x) dx= \left   \{
  \begin{array}{ll}
   1, & \hbox{if} ~i=1,\\
   0,& \hbox{if}~~i=2,3,\ldots
\end{array}
\right.
\end{align}

\subsection{Haar wavelet quasilinearization technique}
The Haar wavelet   quasilinearization technique  will be discussed for \eqref{sec2:eq3} with BCs.  \eqref{sec2:eq4} and \eqref{sec2:eq5}.
\subsubsection{The Dirichlet BCs. \eqref{sec2:eq4}}
To apply the Haar wavelet  to  problem \eqref{sec2:eq3}, we approximate the second  order derivative term by the Haar wavelet series as
\begin{align}\label{sec3:eq9}
y''_{n+1}(x)=\sum_{i=1}^{2M} a_i h_i(x).
\end{align}
Let us define the collocation points as
\begin{align}\label{sec3:eq10}
x_j=\frac{j-0.5}{2M},~~~~~~j=1,2,\ldots,2M.
\end{align}
Integrating  \eqref{sec3:eq9} twice from $0$ to $x$ and using BCs. \eqref{sec2:eq4}, we get
\begin{align}
\label{sec3:eq11}  y_{n+1}(x)&=\alpha_1+(\beta_1-\alpha_1)x +\sum_{i=1}^{2M} a_i \bigg(p_{i,2}(x)-C_{i,1}\;x\bigg),\\
\label{sec3:eq11a} y'_{n+1}(x)&=\beta_1-\alpha_1+\sum_{i=1}^{2M} a_i \bigg(p_{i,1}(x)-C_{i,1}\bigg).
\end{align}
Substituting \eqref{sec3:eq9},  \eqref{sec3:eq11} and  \eqref{sec3:eq11a} into \eqref{sec2:eq3} and  inserting  collocation points \eqref{sec3:eq10}, a linear system of algebraic equations is obtained as
\begin{align}\label{sec3:eq12}
p(x_j)y''_{n+1}(x_j)+p'(x_j)y'_{n+1}(x_j)+r(x_j)y_{n+1}(x_j)=g(x_j),
\end{align}
where $~n=0,1\ldots,~j=1,2\ldots,2M,$ and
\begin{align}
\label{sec3:eq12a} y''_{n+1}(x_j)&=\sum_{i=1}^{2M} a_i h_i(x_j),\\
\label{sec3:eq12b} y'_{n+1}(x_j)&=\beta_1-\alpha_1+\sum_{i=1}^{2M} a_i \bigg(p_{i,1}(x_j)-C_{i,1}\bigg),\\
\label{sec3:eq12c} y_{n+1}(x_j)&=\alpha_1+(\beta_1-\alpha_1) x_j +\sum_{i=1}^{2M} a_i \bigg(p_{i,2}(x_j)-C_{i,1} x_j \bigg),\\
\label{sec3:eq12d} r(x_j)&=-q(x_j) f_y(x_j,y_n(x_j)),\\
\label{sec3:eq12e} g(x_j)&=q(x_j)\bigg(f(x_j,y_n(x_j))-y_n(x_j) f_y(x_j,y_n(x_j))\bigg).
\end{align}
Equation \eqref{sec3:eq12} gives a sequence of  $2M\times2M$ linear system of equations whose solution for the unknown coefficients $a_1,a_2,\ldots,a_{2M}$ can be calculated using the Gauss-elimination  method. We start with an initial approximation $y_{0}$ to get solutions $y_{1},y_{2}\ldots$.

\subsubsection{The Neumann-Robin BCs. \eqref{sec2:eq5}}
We integrate \eqref{sec3:eq9} twice from $0$ to $x$ and apply BCs. \eqref{sec2:eq5} to get
\begin{align}
\label{sec3:eq13} y_{n+1}(x)&=\frac{\gamma_2}{\alpha_2}-\frac{\beta_2}{\alpha_2} a_1+\sum_{i=1}^{2M} a_i \bigg(p_{i,2}(x)-C_{i,1}\bigg),\\
\label{sec3:eq13a} y'_{n+1}(x)&=\sum_{i=1}^{2M} a_i p_{i,1}(x).
\end{align}
Substituting  \eqref{sec3:eq9},  \eqref{sec3:eq13} and  \eqref{sec3:eq13a} into \eqref{sec2:eq3} and  inserting  collocation points \eqref{sec3:eq10}, a linear system of algebraic equations is obtained as
\begin{align}\label{sec3:eq14}
p(x_j)y''_{n+1}(x_j)+p'(x_j)y'_{n+1}(x_j)+r(x_j)y_{n+1}(x_j)=g(x_j),
\end{align}
where $n=0,1\ldots,~j=1,2\ldots,2M,$ and
\begin{align}
\label{sec3:eq14a} y''_{n+1}(x_j)&=\sum_{i=1}^{2M} a_i h_i(x_j),\\
\label{sec3:eq14b} y'_{n+1}(x_j)&=\sum_{i=1}^{2M} a_i p_{i,1}(x_j),\\
\label{sec3:eq14c} y_{n+1}(x_j)&=\frac{\gamma_2}{\alpha_2}-\frac{\beta_2}{\alpha_2} a_1+\sum_{i=1}^{2M} a_i \bigg(p_{i,2}(x_j)-C_{i,1}\bigg),\\
\label{sec3:eq14d} r(x_j)&=-q(x_j) f_y((x_j),y_n(x_j)),\\
\label{sec3:eq14e} g(x_j)&=q(x_j)\bigg(f(x_j,y_n(x_j))-y_n(x_j) f_y(x_j,y_n(x_j))\bigg).
\end{align}
Equation \eqref{sec3:eq14} gives a sequence of  $2M\times2M$  linear system of equations whose solution for the unknown coefficients $a_1,a_2,\ldots,a_{2M}$ can be calculated
using Gauss-elimination  method. We start with an initial approximation $y_{0}$ to get solutions $y_{1},y_{2}\ldots$.

\section{Convergence}
The present work is based on quasilinearization technique and Haar wavelet method, so  we discuss the convergence of
both the schemes.

\subsection{Convergence of quasilinearization technique}

\begin{theorem}\label{sec2:eq7}
The sequence  $\{y_{n}\}$ of solutions defined in \eqref{sec2:eq6} converges uniformly with quadratic  rate of  convergence.
 \end{theorem}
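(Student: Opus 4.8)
The plan is to use the integral (fixed-point) formulation \eqref{sec2:eq6} and a standard quadratic-convergence argument for Newton-type iterations adapted to this singular setting. Define the operator $T$ by
\begin{align*}
(Ty)(x) = v(x) + \int\limits_0^1 G(x,t)\, q(t)\, f(t,y(t))\, dt,
\end{align*}
so that the exact solution $y^\ast$ satisfies $y^\ast = Ty^\ast$, and the iteration \eqref{sec2:eq6} is precisely a quasilinearized (Newton) step for the equation $y = Ty$. The first ingredient I would establish is a uniform bound: using $(C_1)$–$(C_4)$, in particular the integrability conditions $(C_2)$ (for \eqref{sec:eq2}) or $(C_3)$ (for \eqref{sec:eq3}), the kernel satisfies $\int_0^1 |G(x,t)|\,q(t)\,dt \le K < \infty$ uniformly in $x\in[0,1]$; this is where the ``doubly singular'' hypotheses are consumed, and it guarantees $T$ maps $C[0,1]$ into itself despite $p(0)=0$ and the possible discontinuity of $q$ at $x=0$.

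Next I would set up the error recursion. Writing $e_{n+1}(x) = y^\ast(x) - y_{n+1}(x)$ and subtracting \eqref{sec2:eq6} from the integral identity for $y^\ast$, one gets
\begin{align*}
e_{n+1}(x) = \int\limits_0^1 G(x,t)\, q(t)\, \Big[ f(t,y^\ast) - f(t,y_n) - (y_{n+1}-y_n) f_y(t,y_n) \Big]\, dt.
\end{align*}
Rewriting the bracket as $\big[f(t,y^\ast)-f(t,y_n)-(y^\ast-y_n)f_y(t,y_n)\big] + e_{n+1}(t)\, f_y(t,y_n)$ and applying Taylor's theorem with remainder to the first group yields
\begin{align*}
e_{n+1}(x) = \int\limits_0^1 G(x,t)\, q(t)\, f_y(t,y_n)\, e_{n+1}(t)\, dt + \int\limits_0^1 G(x,t)\, q(t)\, \tfrac12 f_{yy}(t,\eta_t)\, e_n(t)^2\, dt,
\end{align*}
for some intermediate point $\eta_t$. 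Taking sup-norms, using $\|G q\|$-bound $K$, the monotonicity $f_y\ge 0$ together with a local bound $\|f_y\|_\infty \le L$ on the relevant compact set, and $\|f_{yy}\|_\infty \le N$ there, gives
\begin{align*}
\|e_{n+1}\|_\infty \le K L\, \|e_{n+1}\|_\infty + \tfrac12 K N\, \|e_n\|_\infty^2,
\end{align*}
and hence, provided $KL < 1$,
\begin{align*}
\|e_{n+1}\|_\infty \le \frac{KN}{2(1-KL)}\, \|e_n\|_\infty^2 =: \Lambda\, \|e_n\|_\infty^2.
\end{align*}

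From this quadratic recursion the conclusion follows in the usual way: if the initial error is small enough that $\Lambda \|e_0\|_\infty < 1$, then $\Lambda\|e_n\|_\infty \le (\Lambda\|e_0\|_\infty)^{2^n} \to 0$, so $y_n \to y^\ast$ uniformly on $[0,1]$, and the estimate $\|e_{n+1}\|_\infty \le \Lambda \|e_n\|_\infty^2$ is exactly quadratic convergence. The main obstacle I anticipate is the first step — proving the uniform kernel bound $\sup_x \int_0^1 |G(x,t)| q(t)\, dt < \infty$ and controlling $f$, $f_y$, $f_{yy}$ on an a priori unknown region — because $G$ is built from $b(x)=\int_0^x dt/p(t)$, which blows up as $x\to 0$ when $p(0)=0$, and $q$ may be singular there too; one must check that the combination $G(x,t)q(t)$ is integrable uniformly in $x$, using precisely $(C_2)$/$(C_3)$, and one must either assume $f_{yy}$ exists and is locally bounded or replace the Taylor remainder argument by a Lipschitz-in-$y$ estimate on $f_y$ to close the recursion. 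A secondary technical point is justifying that all iterates $y_n$ stay in a fixed bounded set where these bounds on $f_y, f_{yy}$ are valid, which can be arranged by an inductive argument once $\|e_0\|_\infty$ is taken sufficiently small.
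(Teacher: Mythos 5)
Your proposal is correct in substance but tracks a different quantity than the paper does. The paper's proof works entirely with the \emph{successive differences} $\Delta y_{n+1}=y_{n+1}-y_n$: subtracting the integral identity \eqref{sec2:eq6} at consecutive iterations, expanding $\Delta f_n$ by the mean value theorem with second-order remainder, and using the kernel bound $g_1=\max_x\big|\int_0^1 G(x,t)q(t)\,dt\big|$ together with $m_1g_1<1$, it arrives at $\|\Delta y_{n+1}\|\le \frac{k_1 g_1}{2(1-m_1g_1)}\|\Delta y_n\|^2$ and stops there (``quadratic convergence, if there is convergence at all''). You instead track the \emph{true error} $e_n=y^\ast-y_n$ against an exact solution and derive the analogous recursion $\|e_{n+1}\|\le\Lambda\|e_n\|^2$. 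The algebraic core — Taylor remainder to expose the quadratic term, uniform bound on $\int_0^1|G(x,t)|q(t)\,dt$, and solving for the new-level norm under a smallness condition on $\|f_y\|\cdot\|Gq\|$ — is identical in both. What your route buys is an actual convergence conclusion ($\Lambda\|e_0\|<1$ forces $\|e_n\|\to 0$ doubly exponentially), at the cost of presupposing that $y^\ast$ exists and satisfies the integral equation; the paper's route needs no such presupposition (and could in principle yield existence via a Cauchy-sequence argument, though the paper does not carry this out). You are also more careful than the paper in flagging the genuine technical burdens — the uniform integrability of $G(x,t)q(t)$ near the singularity, the existence and local boundedness of $f_{yy}$, and the need to confine all iterates to a fixed bounded set — all of which the paper simply absorbs into the unjustified assertions $k_1,m_1,g_1<\infty$ and the implicit assumption $m_1g_1<1$.
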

\begin{proof}
From \eqref{sec2:eq6},  we have
\begin{eqnarray}\label{sec2:eq9}
\Delta y_{n+1}=\int \limits_{0}^{ 1} G(x,t)q(t) \left\{\Delta f_n+\Delta y_{n+1}f_{y}(t,y_{n})-\Delta y_{n}f_{y}(t,y_{n-1})\right\}dt,
\end{eqnarray}
where $\Delta y_n= y_n-y_{n-1}$ and  $\Delta f_n=f_n(x,y_n)-f_{n-1}(x,y_{n-1})$.

\noindent From the mean-value theorem we know  that
\begin{align}\label{sec2:eq10}
\Delta f_n=\Delta y_{n}f_{y}(x,y_{n-1})+\frac{(\Delta y_{n})^{2}}{2} f_{yy}(x,\theta),~~~~y_{n-1}<\theta<y_{n}.
\end{align}
Using   \eqref{sec2:eq10}, the \eqref{sec2:eq9} reduces to
\begin{align}\label{sec2:eq11}
\Delta y_{n+1}= \int\limits_{0}^{1}G(x,t)q(t)\left\{\frac{(\Delta y_{n})^{2}}{2} f_{yy}(t,\theta)+\Delta y_{n+1}f_{y}(t,y_{n})\right\}dt.
\end{align}
Equation \eqref{sec2:eq11} implies
\begin{align}\label{sec2:eq12}
|\Delta y_{n+1}| \leq \int \limits_{0}^{1}|G(x,t)q(t)|\left\{\frac{| \Delta y_{n}|^{2}}{2}|f_{yy}(t,\theta)|+|\Delta y_{n+1}| |f_{y}(t,y_{n})|\right\}dt.
\end{align}
Hence, we have
\begin{align*}
\|\Delta y_{n+1}\|& \leq \max_{x \in [0,1]}\int \limits_{0}^{1}|G(x,t)q(t)|\left\{\frac{| \Delta y_{n}|^{2}}{2}|f_{yy}(t,\theta)|+|\Delta y_{n+1}| |f_{y}(t,y_{n})|\right\}dt \\
&\leq \frac{k_1\; g_1}{2}\| \Delta y_{n}\|^{2} +m_1 g_1 \|\Delta y_{n+1}\|,
\end{align*}
where \begin{align*}
\|y\|&= \max_{x\in [0,1]}|y(x)|,~~~k_1= \max_{y}|f_{yy}(y)|,~~~m_1=\max_{y}\{|f(y)|,|f_y(y)|\}<\infty,\\
g_1&=\max_{x\in [0,1]}\bigg|\int \limits_{0}^{ 1}G(x,t)q(t)dt\bigg|<\infty.
\end{align*}
Thus we have
\begin{align}\label{sec2:eq13}
\|\Delta y_{n+1}\|\leq \frac{k_1\; g_1}{2(1-m_1 g_1)}\|\Delta y_{n}\|^2.
\end{align}
This shows that there is quadratic convergence, if there is convergence at all.
\end{proof}

\subsection{Convergence of Haar wavelet method}

\begin{theorem}\label{sec3:eq15}
Suppose that $y(x)\in\mathcal{L}^2[0,1]$ and $y(x)$ satisfies Lipschitz's condition $|y(x_1)-y(x_2)|\leq L |x_1-x_2|.$  Then Haar wavelet method will be convergent in the sense of $\|y-y_k\|_{2}\rightarrow 0$ as $k\rightarrow \infty$ and its order of convergence is
 \begin{align}\label{sec3:eq16}
\|y-y_k\|_{2}=O\bigg(\frac{1}{ k}\bigg),
\end{align}
where $k=2M=2^{J+1}$, $J$ is the level of resolution  the Haar wavelet.
\end{theorem}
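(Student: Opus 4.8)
The plan is to bound the tail of the Haar expansion of $y$ by exploiting the Lipschitz condition to control the Haar coefficients, and then sum the resulting geometric-type series over all resolution levels. First I would write $y(x) = \sum_{i=1}^{\infty} a_i h_i(x)$ in $L^2[0,1]$, so that $y_k(x) = \sum_{i=1}^{2M} a_i h_i(x)$ is the truncation at level $J$ (with $2M = 2^{J+1}$), and by orthogonality of the Haar system,
\begin{align*}
\|y - y_k\|_2^2 = \sum_{i=2M+1}^{\infty} a_i^2 \, \|h_i\|_2^2 = \sum_{j=J+1}^{\infty} \sum_{k=0}^{2^j-1} a_{m+k+1}^2 \, 2^{-j},
\end{align*}
where $m = 2^j$ and I have used $\|h_i\|_2^2 = 2^{-j}$ from the orthogonality relation stated in the excerpt.

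The key step is estimating $a_i$ for $i = m+k+1$ with $m = 2^j$. By the inner-product formula, $a_i = 2^{j}\int_0^1 y(x) h_i(x)\,dx$ (equivalently $a_i \|h_i\|_2^2 = \int_0^1 y h_i$), and since $h_i$ is $+1$ on $[\xi_1,\xi_2)$, $-1$ on $[\xi_2,\xi_3)$, and these two subintervals each have length $1/(2m) = 2^{-(j+1)}$, I would write
\begin{align*}
a_i = 2^{j}\left(\int_{\xi_1}^{\xi_2} y(x)\,dx - \int_{\xi_2}^{\xi_3} y(x)\,dx\right) = 2^{j}\int_{0}^{1/(2m)} \big(y(\xi_1 + s) - y(\xi_2 + s)\big)\,ds.
\end{align*}
Since $|\xi_1 + s - (\xi_2 + s)| = 1/(2m)$, the Lipschitz bound gives $|y(\xi_1+s) - y(\xi_2+s)| \le L/(2m)$, hence $|a_i| \le 2^{j}\cdot \tfrac{1}{2m}\cdot\tfrac{L}{2m} = \tfrac{L}{2}\,2^{-2j}$ (using $m = 2^j$), i.e. $a_i^2 \le \tfrac{L^2}{4}\,2^{-4j}$.

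Finally I would substitute this into the tail sum: for each level $j$ there are $2^j$ values of $k$, so
\begin{align*}
\|y - y_k\|_2^2 \le \sum_{j=J+1}^{\infty} 2^j \cdot \frac{L^2}{4} 2^{-4j} \cdot 2^{-j} = \frac{L^2}{4}\sum_{j=J+1}^{\infty} 2^{-4j} = \frac{L^2}{4}\cdot\frac{2^{-4(J+1)}}{1 - 2^{-4}} = \frac{L^2}{60}\, 2^{-4(J+1)}.
\end{align*}
Taking square roots yields $\|y - y_k\|_2 \le C\, 2^{-2(J+1)} = C/k^2 \le C/k$ for a constant $C$ depending only on $L$, which is stronger than the stated $O(1/k)$ and in particular proves $\|y-y_k\|_2 \to 0$. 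The main obstacle — really the only nontrivial point — is getting the clean coefficient estimate $|a_i| \le C\,2^{-2j}$ from the Lipschitz hypothesis; once the pairing of the two half-intervals in $h_i$ is used to produce the difference $y(\xi_1+s)-y(\xi_2+s)$, the rest is bookkeeping with the geometric series. I would also note at the outset that the $i=1$ term never appears in the tail, so no separate treatment of $h_1$ is needed.
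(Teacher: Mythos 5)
Your overall strategy is exactly the paper's: expand the error in the Haar basis, use orthogonality to reduce $\|y-y_k\|_2^2$ to a sum of squared coefficients, bound each coefficient via the Lipschitz condition by pairing the two half-intervals of $h_i$, and sum a geometric series over the levels $j\ge J+1$. (The only cosmetic difference is normalization: the paper works with $h_i=2^{j/2}H(2^jx-k)$, so $\|h_i\|_2=1$ and $a_i=\int_0^1 yh_i$, whereas you keep the $\pm1$-valued wavelets and carry the factor $\|h_i\|_2^2=2^{-j}$ explicitly; the paper also invokes the mean value theorem for integrals where you write the difference $y(\xi_1+s)-y(\xi_2+s)$ directly, which is a slightly cleaner way to the same estimate.)

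However, there is an arithmetic error in your key coefficient bound, and it propagates into a final claim that is actually false. With $m=2^j$ you have
\begin{align*}
|a_i|\;\le\; 2^{j}\cdot\frac{1}{2m}\cdot\frac{L}{2m}\;=\;\frac{2^{j}L}{4\cdot 2^{2j}}\;=\;\frac{L}{4}\,2^{-j},
\end{align*}
not $\frac{L}{2}\,2^{-2j}$ as you wrote; your value is too small by a factor of order $2^{j}$. With the correct bound the tail sum becomes
\begin{align*}
\|y-y_k\|_2^2\;\le\;\sum_{j=J+1}^{\infty}2^{j}\cdot\frac{L^2}{16}2^{-2j}\cdot 2^{-j}\;=\;\frac{L^2}{16}\sum_{j=J+1}^{\infty}2^{-2j}\;=\;\frac{L^2}{12}\,\frac{1}{k^2},
\end{align*}
which gives exactly the stated rate $\|y-y_k\|_2=O(1/k)$ (matching the paper's $\frac{L^2}{3k^2}$ up to the constant). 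Your version instead yields $\|y-y_k\|_2=O(1/k^2)$, which cannot be right: for $y(x)=x$ (Lipschitz with $L=1$) the truncation $y_k$ is the $L^2$ projection onto piecewise constants on $k$ dyadic subintervals, and a direct computation gives $\|y-y_k\|_2=\frac{1}{\sqrt{12}\,k}$ exactly, so no bound of order $k^{-2}$ can hold. The fact that your argument appeared to prove something strictly stronger than the known sharp rate was the signal to recheck the arithmetic; once the coefficient estimate is corrected, the rest of your proof is fine and coincides with the paper's.
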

\begin{proof}
Consider
\begin{align*}
y(x)-y_k(x)=\sum_{i=k}^{\infty} a_i h_i(x)=\sum_{i=2^{J+1}}^{\infty} a_i h_i(x),
\end{align*}
where, ~~$y_k(x)= \displaystyle \sum_{i=0}^{k-1} a_i h_i(x)$.
Taking $\mathcal{L}_2$ norm, we obtain
\begin{align}\label{sec3:eq17a}
\nonumber  \|y-y_k\|_{2}^2&=\int\limits_{0}^{1}\bigg(\sum_{i=2^{J+1}}^{\infty} a_i h_i(x), \sum_{l=2^{J+1}}^{\infty} a_l h_l(x) \bigg)dx=\sum_{i=2^{J+1}}^{\infty} \sum_{l=2^{J+1}}^{\infty}a_i  a_l \int\limits_{0}^{1} h_i(x) h_l(x)dx\\
                & =\sum_{i=2^{J+1}}^{\infty} a_i^2,
\end{align}
where $a_i$ is
 \begin{align}\label{sec3:eq17}
a_i=\int\limits_{0}^{1} y(x) h_i(x)dx.
\end{align}
Using \eqref{sec3:eq15a} and \eqref{sec3:eq15b},   the equation \eqref{sec3:eq17} can be written as
\begin{align}\label{sec3:eq18}
a_i=2^{\frac{j}{2}}\left(\int\limits_{\frac{k}{2^{j}}}^{\frac{k+\frac{1}{2}}{2^{j}}} y(x) dx-\int\limits_{\frac{k+\frac{1}{2}}{2^{j}}}^{\frac{k+1}{2^{j}}} y(x) dx\right).
\end{align}
Applying the mean value  theorem and  Lipschitz's condition, the equation \eqref{sec3:eq18} becomes
\begin{align*}
a_i&=2^{\frac{j}{2}} \bigg[ \bigg(\frac{k+\frac{1}{2}}{2^{j}}-\frac{k}{2^{j}}\bigg) y(x_1)-\bigg(\frac{k+1}{2^{j}}-\frac{k+\frac{1}{2}}{2^{j}}\bigg) y(x_2)\bigg]\\
&=2^{-\frac{j}{2}-1}[y(x_1)-y(x_2)]\leq 2^{-\frac{j}{2}-1} L (x_1-x_2)=2^{-\frac{j}{2}-1}\; L\; 2^{-j}=L2^{- \frac{3j}{2}-1}
\end{align*}
Thus, we obtain
\begin{align}\label{sec3:eq18a}
 a_i\leq L 2^{- \frac{3j}{2}-1}.
\end{align}
Using \eqref{sec3:eq18a} into \eqref{sec3:eq17a}, we get
\begin{align*}
\|y-y_k\|_{2}^2&=\sum_{i=2^{J+1}}^{\infty} a_i^2= \sum_{j=J+1}^{\infty} \bigg[\sum_{i=2^{j}}^{2^{j+1}-1} a_i^2\bigg]\leq \sum_{j=J+1}^{\infty} L^2\bigg[\sum_{i=2^{j}}^{2^{j+1}-1} 2^{-3j-2} \bigg]\\
                 &= L^2 \sum_{j=J+1}^{\infty} 2^{-3j-2}[ 2^{j+1}-1-2^{j}+1]=\frac{L^2 }{3} 2^{-2-2 J}=\frac{L^2 }{3}\frac{1}{k^2}.
\end{align*}
Hence, we obtain
\begin{align}\label{sec3:eq19}
\|y-y_k\|_2= O\bigg(\frac{1}{k}\bigg).
\end{align}
 Equation  \eqref{sec3:eq19} ensures the convergence of Haar wavelet approximation at higher level of resolution $J$ is considered.
 \end{proof}
\noindent \textbf{Remark}:  Each iteration of quasilinearization technique gives linear singular  equation in $y_{n+1}$ which is solved to obtain $y_{n+1}$ by Haar wavelet method. According to  \eqref{sec3:eq19}, $y_{n+1}$
converges  to $y$  if  the higher level of resolution $J$ is considered, and at the same time quasilinearization technique works that is for given $y_{0}$, we obtain solution $y_{1}$ of linear problem  \eqref{sec2:eq3} with BCs.  \eqref{sec2:eq4} and \eqref{sec2:eq5} by Haar wavelet method, at next iteration we get $y_{2}$ by Haar wavelet method and so on. Since quasilinearization technique is second order accurate so it gives rapid convergence.

\section{Numerical experiments and discussion}
To check the accuracy and efficiency of  Haar wavelet quasilinearization technique eight singular problems are considered. For the sake of comparison, the cubic spline interpolation is used to obtain the solution at any points in the interval $[0,1]$.  We define absolute error as
$$e_a=|y-y_h|$$
where $y$ is exact  and  $y_h$  Haar solutions. All computational work has been done with the help of MATLAB software.

\begin{problem}\label{sec4:eq1}
\end{problem}
\noindent Consider the problem  \eqref{sec:eq1} with BCs.  \eqref{sec:eq2} where  $p(x)=q(x)=x^{0.5}$ and  $f(x,y)=0.5 e^{y}-e^{2y}$ as in
 \cite{m2003decomposition,singh2013numerical}.  Its exact solution is $$y=\ln \bigg(\frac{2}{x^2+1}\bigg).$$
 Here,  $\alpha_1=\ln2$ and $\beta_1=0$. We have solved this problem by the Haar wavelet quasilinearization technique. By fixing $J=3$ with $n=8$ iterations, we obtain Haar solution $y_h$.  The numerical results of the Haar solution $y_h$ with those  the exact $y$ and the ADM with Green's (ADMG)  \cite{singh2013numerical} along with the maximum absolute error $e_a$ are reported in Table \ref{tab1}. The graphs of the exact $y$  and hwcm  solutions  are depicted Fig. \ref{fig1}.
\begin{figure}[!htb]
\centering
\includegraphics[width=0.45\textwidth]{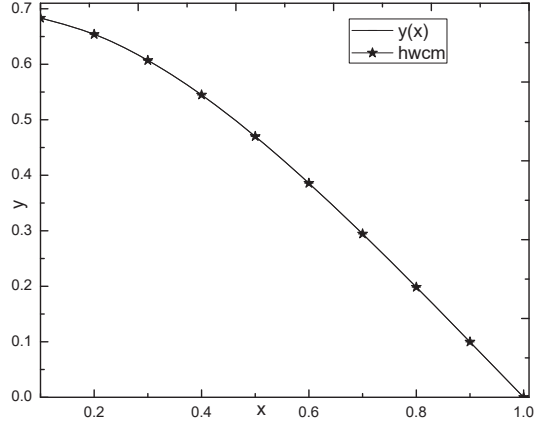}  \vspace{-0.3cm}
\caption{Plots  of the Haar   and  the exact $y$ solutions of problem \ref{sec4:eq1} for $J=3$}\label{fig1}
\end{figure}
\begin{table}[!htb]
\caption{Numerical solutions and  the absolute error $e_a=|y-y_h|$ of problem \ref{sec4:eq1}}\label{tab1}
 \vspace{-0.3cm}
\centering
\addtolength{\tabcolsep}{15pt}
\begin{tabular}{c|cc ccc}
\cline{1-5}
$x$ & $y_h$ & $y$ & ADMG \cite{singh2013numerical} & $e_a$	\\
\cline{1-5}
0.1	&	0.68320	&	0.68330	&	0.68367&	1.06E-04	\\
0.3	&	0.60697	&	0.60715	&	0.60784&	1.79E-04	\\
0.5	&	0.47020	&	0.47020	&	0.47125&	1.92E-04	\\
0.7	&	0.29437	&	0.29452	&	0.29585&	1.44E-04	\\
0.9	&	0.09982	&	0.099875&	0.10065&	5.42E-05	\\
\hline
\end{tabular}
\end{table}

\begin{problem}\label{sec4:eq2}
\end{problem}
\noindent Consider the problem  \eqref{sec:eq1} with BCs.  \eqref{sec:eq2} where  $p(x)=1$,  $q(x)=x^{-\frac{1}{2}}$ and $f(x,y)=y^{3/2}$ as in
\cite{thomas1927calculation,fermi1927metodo} known as Thomas-Fermi equation. Here, $\alpha_1=1$ and $\beta_1=0$. Similarly, we have solved this problem by the Haar wavelet quasilinearization technique. For $J=2,~3$  with $n=7$ iterations, we obtain Haar solution $y_h$. The numerical results of the Haar solution $y_h$ at $J=2,3$ and the ADM  \cite{singh2013solving} in Table \ref{tab2}. We also plot the  graphs of the ADM solution  and haar solution  in Fig. \ref{fig2}.

\begin{figure}[htbp]
\centering
\includegraphics[width=0.45\textwidth]{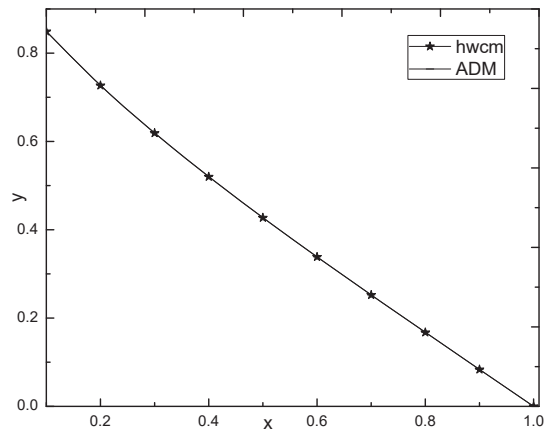}
 \vspace{-0.3cm}
\caption{Plots of the Haar and  the ADM solutions of Problem \ref{sec4:eq2} for $J=2$}
\label{fig2}
\end{figure}

\begin{table}[htbp]
\caption{Numerical solutions and  the absolute error $e_a=|y-y_h|$ of problem \ref{sec4:eq2}}\label{tab2}
 \vspace{-0.3cm}
\centering
 \addtolength{\tabcolsep}{16pt}
\begin{tabular}{c|cc ccc}
\cline{1-4}
$x$& $y_h~(J=2)$& $y_h~(J=3)$ & ADM  \cite{singh2013solving} \\
\cline{1-4}
0.1	&	0.84976	&	0.84909	&	0.84950\\
0.3	&	0.61829	&	0.61888	&	0.61937	\\
0.5	&	0.42672	&	0.42723	&	0.42765	\\
0.7	&	0.25187	&	0.25220	&	0.25249	\\
0.9	&	0.08351	&	0.08361	&	0.08374	\\
\hline
\end{tabular}
\end{table}


\begin{problem}\label{sec4:eq3}
\end{problem}
\noindent Consider the problem  \eqref{sec:eq1} with BCs.  \eqref{sec:eq2} where  $p(x)=q(x)=x^{0.5}$ and $f(x,y)=4 x^{2}e^{y}\big( 4x^{4}e^{y}-3.5 \big)$  as in \cite{aziz2001fourth,singh2013numerical,singh2016efficient}.  The exact solution is
$$y(x)=\ln \bigg(\frac{1}{4+x^{4}}\bigg).$$ Here, $\alpha_1=\ln\frac{1}{4}$ and  $\beta_1=\ln\frac{1}{5}$. By fixing $J=3$ with $n=7$ iterations, we obtain Haar solution $y_h$. In  Table \ref{tab3}, the numerical results of Haar solution $y_h$ with those the exact $y$ and ADMG solution \cite{singh2013numerical} and the absolute error $e_a$ are shown. The graphs of the exact $y$  and hwcm  solutions  are plotted in Fig. \ref{fig3}.
\begin{figure}[!htb]
\centering
\includegraphics[width=0.45\textwidth]{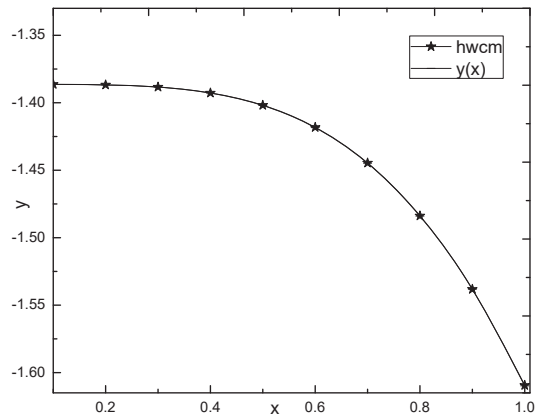} \vspace{-0.3cm}
\caption{Plots of the  hwcm and  the exact $y$  solutions of problem \ref{sec4:eq3}}
\label{fig3}
\end{figure}
\begin{table}[!htb]
\caption{Numerical solutions and the absolute error $e_a$ of problem \ref{sec4:eq3}}\label{tab3}
 \vspace{-0.3cm}
\centering
\addtolength{\tabcolsep}{14pt}
\begin{tabular}{c|cc ccc}
\cline{1-5}
$x$& $y_h$ & $y$ & ADMG \cite{singh2013numerical}& $e_a$	\\
\cline{1-5}
0.1	&	-1.38630	&	-1.38640	&	-1.38632	&	5.92E-05	\\
0.3	&	-1.38830	&	-1.38840	&	-1.38832	&	6.90E-05	\\
0.5	&	-1.40180	&	-1.40200	&	-1.40180	&	1.57E-04	\\
0.7	&	-1.44460	&	-1.44480	&	-1.44459	&	2.22E-04	\\
0.9	&	-1.53820	&	-1.53820	&	-1.53818	&	6.86E-05	\\
\hline
\end{tabular}
\end{table}

\begin{problem}\label{sec4:eq4}
\end{problem}
\noindent Consider the problem  \eqref{sec:eq1} with BCs.  \eqref{sec:eq3} where $p(x)=q(x)=x^{2}$ and   $f(x,y)=y^{5}$  which describes
the equilibrium of isothermal gas spheres \cite{Chawla1988}.  Its  exact solution is $$y(x)= \bigg(1+ \frac{x^2}{3}\bigg)^{-1/2}.$$ Here,   $\alpha_2=1$, $\beta_2=0$ and $\gamma_2=(3/4)^{1/2}$. By fixing $J=3$  with $n=6$ iterations, we obtain Haar solution $y_h$. The numerical results of Haar solution $y_h$  with those the exact $y$ and the ADMG  \cite{singh2014efficient} and the absolute error $e_a$ are reported in Table \ref{tab4}. The graphs of the exact $y$  and Haar solutions are depicted Fig. \ref{fig4}.

\begin{figure}[!htb]
\centering
\includegraphics[width=0.45\textwidth]{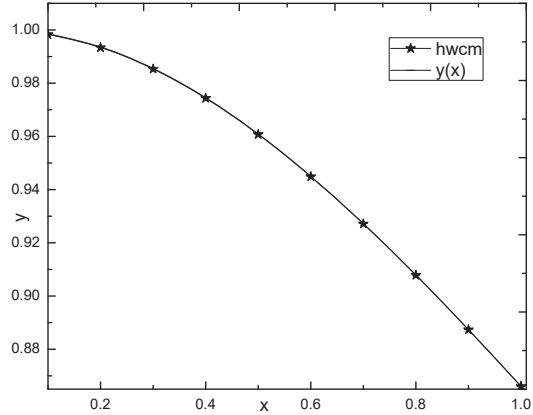}  \vspace{-0.3cm}
\caption{Plots  of the  Haar and  the exact $y$ solutions of Problem  \ref{sec4:eq4}}
\label{fig4}
\end{figure}
\begin{table}[!htb]
\caption{Numerical solutions and  the absolute error $e_a$ of Problem  \ref{sec4:eq4}}\label{tab4}
 \vspace{-0.3cm}
\centering
 \addtolength{\tabcolsep}{8pt}
\begin{tabular}{c|cc ccc}
\cline{1-5}
$x$& $y_h$ &  $y$ & ADMG \cite{singh2014efficient}& $e_a$	\\
\cline{1-5}
0.1	&	0.99834	&	0.99858	&	0.99795	&	2.41E-04	\\
0.3	&	0.98533	&	0.98554	&	0.98501	&	2.11E-04	\\
0.5	&	0.96077	&	0.96093	&	0.96055	&	1.65E-04	\\
0.7	&	0.92715	&	0.92725	&	0.92703 &	1.04E-04	\\
0.9	&	0.88736	&	0.88739	&	0.88732	&	3.27E-05	\\
\hline
\end{tabular}
\end{table}


\begin{problem}\label{sec4:eq5}
\end{problem}
\noindent Consider the problem  \eqref{sec:eq1} with BCs.  \eqref{sec:eq3} where $p(x)=q(x)=x$ and  $f(x,y)=-e^{y}$  which arises an electro-hydrodynamics problem \cite{Keller1955}.  Its exact solution is $$y(x)=2\ln\left(\frac{A+1}{A x^2+1}\right),~A=3-2\sqrt{2}.$$ Here, $\alpha_2=1$, $\beta_2=0$ and $\gamma_2=0$. By fixing $J=3$ with $n=6$ iterations, we obtain Haar solution $y_h$. Table \ref{tab5} shows the comparison of Haar solution $y_h$  with those the exact $y$ and the ADMG solutions \cite{singh2014efficient} and absolute error $e_a$. The graphs of the exact $y$  and hwcm solutions  are depicted Fig. \ref{fig5}.

\begin{figure}[!htb]
\centering
\includegraphics[width=0.45\textwidth]{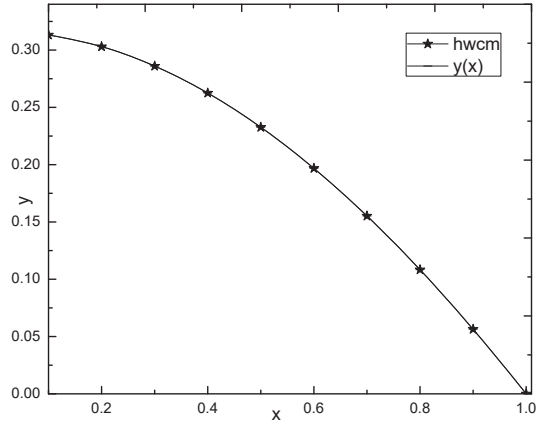}  \vspace{-0.3cm}
\caption{Plots of the hwcm and   exact $y(x)$ solutions of Problem  \ref{sec4:eq5}} \label{fig5}
\end{figure}

\begin{table}[!htb]
\caption{Numerical solutions  and the absolute error $e_a$ of Problem  \ref{sec4:eq5}}\label{tab5}
 \vspace{-0.3cm}
\centering
 \addtolength{\tabcolsep}{12pt}
\begin{tabular}{c|cc ccc}
\cline{1-5}
$x$& $y_h$ &  $y$  & ADMG \cite{singh2014efficient} & $e_a$	\\
\cline{1-5}
0.1	&	0.31327	&	0.31327	&	0.31326	&	8.34E-06	\\
0.3	&	0.28605	&	0.28606	&	0.28604	&	8.08E-06	\\
0.5	&	0.23270	&	0.23270	&	0.23269	&	7.22E-06	\\
0.7	&	0.15525	&	0.15525	&	0.15525	&	5.41E-06	\\
0.9	&	0.05643	&	0.05644	&	0.05644	&	2.21E-06	\\
\hline
\end{tabular}
\end{table}

\begin{problem}\label{sec4:eq6}
\end{problem}
\noindent Consider the problem  \eqref{sec:eq1} with BCs.  \eqref{sec:eq3} where $p(x)=q(x)=x^{2}$ and  $f(x,y)=-e^{-y}$  which arises in
the distribution of heat sources in the human head \cite{duggan1986pointwise}. Here,  $\alpha_2=2$, $\beta_2=1$ and $\gamma_2=0$.  By fixing $J=3$ with $n=7$ iterations, we obtain Haar solution $y_h$. The comparison of Haar solution $y_h$  with those obtained by  the ADMG  \cite{singh2014efficient}, finite difference method (FDM) 	 \cite{pandey1997finite} and the  tangent chord method (TCM)  \cite{duggan1986pointwise} is presented in Table \ref{tab6}. The graphs of the ADMG  and hwcm  are depicted Fig. \ref{fig6}.
\begin{figure}[!htb]
\centering
\includegraphics[width=0.45\textwidth]{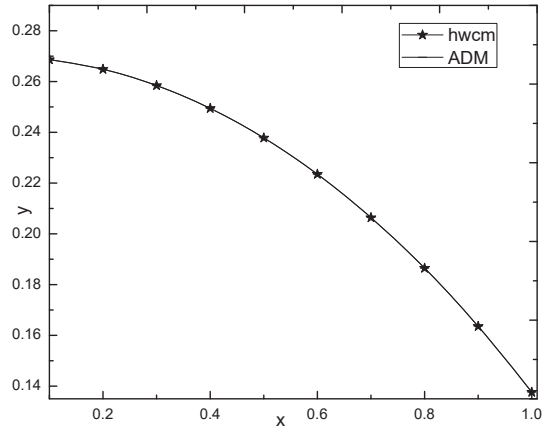} \vspace{-0.3cm}
\caption{Plots of the Haar and   the ADMG  solutions of Problem  \ref{sec4:eq6}} \label{fig6}
\end{figure}
\begin{table}[!htb]
\caption{Numerical solutions  of Problem  \ref{sec4:eq6}}\label{tab6}
 \vspace{-0.3cm}
\centering
 \addtolength{\tabcolsep}{12pt}
\begin{tabular}{c|cc ccc}
\cline{1-5}
$x$& $y_h$ & ADMG \cite{singh2014efficient}	 & TCM \cite{duggan1986pointwise} & FDM	 \cite{pandey1997finite}\\
\cline{1-5}
0.1	&	0.26866	&	0.26862	&	0.26907	&	0.26875	\\
0.3	&	0.25845	&	0.25841	&	0.25886	&	0.25853	\\
0.5	&	0.23782	&	0.23781	&	0.23822	&	0.23791	\\
0.7	&	0.20640	&	0.20641	&	0.20677	&	0.20649	\\
0.9	&	0.16356	&	0.16359	&	0.16387	&	0.16365	\\
\hline
\end{tabular}
\end{table}

\begin{problem}\label{sec4:eq7}
\end{problem}
\noindent Consider the problem  \eqref{sec:eq1} with BCs.  \eqref{sec:eq3} where $p(x)=q(x)=x^{2}$ and  $f(x,y)=\frac{0.76129 y}{y+0.03119}$ which models a oxygen diffusion in a spherical cell with  oxygen uptake kinetics \cite{lin1976oxygen}. Here, $\alpha_2=5$, $\beta_2=1$ and  $\gamma_2=5$. By fixing $J=3$  with $n=7$ iterations, we obtain haar solution $y_h$. Table \ref{tab7} shows the comparison  of Haar solutions  $y_h$ with those obtained by
 the ADMG  \cite{singh2014efficient},  the variational iteration method (VIM)   \cite{wazwaz2011variational}  and the cubic spline method (CSM)  \cite{kanth2006cubic}. The graphs of the ADM  and hwcm  solutions  are depicted Fig. \ref{fig7}.
\begin{figure}[!htb]
\centering
 \vspace{-0.3cm}
\includegraphics[width=0.45\textwidth]{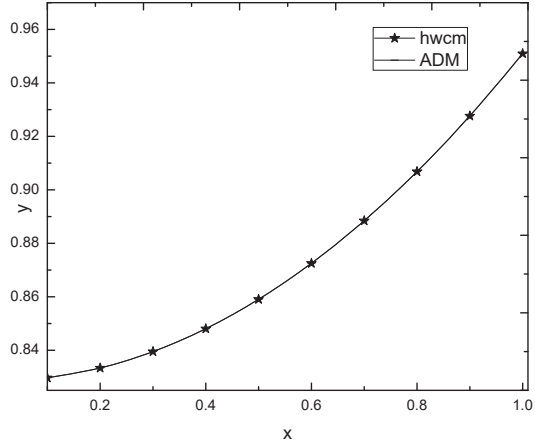}
\caption{Plots of the Haar and  the ADM solutions of Problem  \ref{sec4:eq7}} \label{fig7}
\end{figure}

\begin{table}[!htb]
\caption{Numerical results of solutions of Problem \ref{sec4:eq7}}\label{tab7}
 \vspace{-0.3cm}
\centering
 \addtolength{\tabcolsep}{14pt}
\begin{tabular}{c|cc ccc}
\cline{1-5}
$x$& $y_h$ & ADMG \cite{singh2014efficient}	 & VIM  \cite{wazwaz2011variational}	 & CSM \cite{kanth2006cubic}	\\
\cline{1-5}
0.1	&	0.82971	&	0.82970	&	0.82970	&	0.82970	\\
0.3	&	0.83949	&	0.83949	&	0.83948	&	0.83948	\\
0.5	&	0.85907	&	0.85906	&	0.85906	&	0.85906	\\
0.7	&	0.88845	&	0.88844	&	0.88844	&	0.88844	\\
0.9	&	0.92765	&	0.92765	&	0.92765 &	0.92765\\
\hline
\end{tabular}
\end{table}

\begin{problem}\label{sec4:eq8}
\end{problem}
\noindent Consider the problem  \eqref{sec:eq1} with BCs.  \eqref{sec:eq3} where $p(x)=q(x)=x^{3}$ and $f(x,y)=\frac{1}{2}-\frac{1}{8y^2}$  which arises in the radial stress on a rotationally symmetric shallow membrane cap \cite{dickey1989rotationally}. Here, $\alpha_2=1$, $\beta_2=0$ and $\gamma_2=1$. By fixing $J=3$  with $n=7$ iterations, we obtain haar solution $y_h$. The comparison of Haar solution $y_h$ with those obtained by the ADMG \cite{singh2014efficient} and  VIM  \cite{ravi2010he} are reported in Table \ref{tab1}. We plot the graphs of the ADM  and hwcm
 solutions in Fig. \ref{fig8}.
 \begin{figure}[!htb]
\centering
\includegraphics[width=0.45\textwidth]{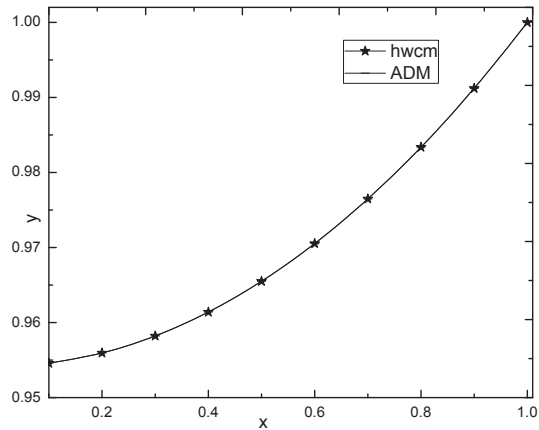} \vspace{-0.3cm}
\caption{Plots of the hwcm  and   the ADM  solutions of Problem\ref{sec4:eq8}} \label{fig8}
\end{figure}
\begin{table}[!htb]
\caption{Numerical results of solutions of Problem  \ref{sec4:eq8}}\label{tab8}
 \vspace{-0.3cm}
\centering
 \addtolength{\tabcolsep}{20pt}
\begin{tabular}{c|cc ccc}
\cline{1-4}
$x$& $y_h$ & ADMG \cite{singh2014efficient}	 & VIM  \cite{ravi2010he}	 \\
\cline{1-4}
0.1	&	0.95459	&	0.95458	&	0.95263	\\
0.3	&	0.95822	&	0.95822	&	0.95649	\\
0.5	&	0.96550	&	0.96550	&	0.96420	\\
0.7	&	0.97648	&	0.97647	&	0.97571	\\
0.9	&	0.99121	&	0.99120	&	0.99098	\\
\hline
\end{tabular}
\end{table}

\section{Conclusion}
In this paper, the Haar wavelet quasilinearization technique has been proposed  for
nonlinear doubly singular boundary value  problems arising in various physical models. It has been shown that Haar wavelet method with quasilinearization technique gives excellent results when applied to different physical models such as oxygen diffusion in a spherical cell  \cite{lin1976oxygen}, the heat sources in the human head \cite{duggan1986pointwise},  and shallow membrane cap \cite{dickey1989rotationally}. The numerical results obtained by
present method are better than the results obtained by other methods such as the Adomian decomposition method
 \cite{singh2013numerical,singh2014efficient}, the variational iteration method  \cite{wazwaz2011variational,ravi2010he},
 the finite difference method \cite{pandey1997finite}, the cubic spline method\cite{kanth2006cubic} and the tangent chord method
 \cite{duggan1986pointwise} and are in good agreement with exact solutions, as shown in tables \ref{tab1}-\ref{tab8} and  figures
\ref{fig1}-\ref{fig8} for the considered problems through \ref{sec4:eq1}-\ref{sec4:eq8}. The proposed method provides a reliable technique which requires less work compared to other methods such as the  finite difference and cubic spline methods. The convergence analysis of present methods have been discussed.


\end{document}